\documentclass[12pt]{amsart}

\usepackage{amssymb}
\usepackage{mathrsfs}

\newtheorem{thm}{Theorem}[section]

\newtheorem{lem}[thm]{Lemma}

\newtheorem{prop}[thm]{Proposition}

\theoremstyle{remark}
\newtheorem{remark}[thm]{Remark}

\newcommand\sig{\operatorname{sig}}
\newcommand\mub{\mathfrak{M}}
\newcommand\Sfrak{\mathfrak{S}}
\newcommand\Fcal{\mathcal{F}}
\newcommand\Rbb{\mathbb{R}}
\newcommand\Nbb{\mathbb{N}}
\newcommand\Acal{\mathcal{A}}
\newcommand\Bcal{\mathcal{B}}
\newcommand\Gcal{\mathcal{G}}
\newcommand\sbf{\mathbf{s}}

\newcommand\swapvector{\delta}

\title
{Maximal unbalanced families}

\keywords{all-subset arrangement, generalized retarded function, imaginary time formalism, hyperplane arrangement, thermal field theory, unbalanced family, threshold family}

\subjclass[2010]{05A16, 05B35, 52B40, 52C35, 81T28}

\thanks{
The research presented in this paper was partially supported by an REU supplement to
NSF grant DMS--0757507.
The fourth author received support though the Tanner Dean Scholars program at Cornell University.
Any opinions, findings, and conclusions or recommendations expressed in this article
are those of the authors
and do not necessarily reflect the views of the National Science Foundation. 
}

\author[Billera et al.]{L.J.\ Billera}

\author[]{J.\ Tatch Moore}

\author[]{C.\ Dufort Moraites}

\author[]{Y.\ Wang}

\author[]{K.\ Williams}


\address{Louis Billera, Justin Moore \\
Department of Mathematics \\
Cornell University \\
Ithaca, NY 14853-4201 }

\begin{document}

\begin{abstract}
A family of subsets of the set $\{1,2,\dots,n\}$ is said to be {\em unbalanced} if the convex hull of its characteristic vectors misses the diagonal in the $n$-cube.
The purpose of this article is to develop the combinatorics of
maximal unbalanced families.
Specifically, we will prove lower and upper bounds on the number
of maximal unbalanced families of subsets of an $n$-element set -- both
bounds are of the form $2^{C n^2}$ for some $C > 0$. 
These families correspond to the chambers of a hyperplane arrangement,
the restricted all-subset arrangement, that has
arisen in various forms in physics, economics and psychometrics.
In particular, our bounds answer a question posed in thermal field theory concerning the order of the number of chambers of this arrangement. 
\end{abstract}

\maketitle

\section{Introduction}

Let $n$ be a natural number.
A family $\Fcal$ of subsets of $[n] = \{1,2,\dots,n\}$ is \emph{balanced} if there is a
convex combination of their characteristic functions which is constant;
a family is \emph{unbalanced} otherwise.
There is also an equivalent characterization of being unbalanced, provided by the Hahn-Banach Separation Theorem:
$\Fcal$ is unbalanced exactly when there is a $v$ in $\Rbb^n$ such that
\[
\sum_{i =1}^n v_i = 0
\]
and for all $F$ in $\Fcal$,
\[
\sum_{i \in F} v_i > 0.
\]

While minimal balanced families play an important role in game theory and
have been well studied in the literature (see, e.g., \cite{Billera,Peleg,Shapley}),
maximal unbalanced families have only been considered much more recently.
The second author's interest in them arose from \cite{tiling_curv_amen,amen_ramsey}, but they have
equivalent manifestations in psychometrics and economics \cite{unfold_co-dim1,Terao2}, as well as thermal field theory in physics
 \cite{what_calc_TFT}.
In all cases this can be seen by recognizing that maximal unbalanced families correspond
to the chambers of the hyperplane arrangement defined by the linear forms
\begin{equation}\label{rest_all_subsets}
\sum_{i \in F} v_i \qquad (F \subseteq [n], F \ne \emptyset, [n])
\end{equation}
in the vector space $V = \{v \in \Rbb^n : \sum_{i \in [n]} v_i = 0\}$.  In \cite{unfold_co-dim1,Terao2}, this is referred to as the {\em restricted all-subset arrangement}.

T. S. Evans \cite{what_calc_TFT} appears to be the first to have considered (an equivalent form of) this hyperplane arrangement -- see \eqref{all subsets} below -- and calculated the number of chambers for small values of $n$ (see also \cite{unfold_co-dim1,vanEj}).
We will use $E_n$ to denote the number of maximal unbalanced families of subsets of an $n$ element set.
\begin{prop} \cite{what_calc_TFT, unfold_co-dim1, vanEj}\footnote{
The value of $E_9$ does not appear in \cite{what_calc_TFT} but was rather taken from the On-Line Encyclopedia of Integer Sequences.
Our definition of $E_n$ follows the convention in \cite{what_calc_TFT};
the indices in the entries in OEIS are shifted by 1 at the time of this writing.
}
The first values of $E_n$ are $E_1=0$, $E_2 = 2$, $E_3 = 6$, $E_4 = 32$, $E_5 = 370$,
$E_6 =  11,292$, $E_7 = 1,066,044$, $E_8 = 347,326,352$, $E_9 = 419,172,756,930$.
\end{prop}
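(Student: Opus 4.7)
The plan is to reduce the statement to a chamber-counting problem for a specific central hyperplane arrangement, and then to enumerate for each $n$ separately. By the correspondence noted in the excerpt, $E_n$ is the number of chambers of the restricted all-subset arrangement $\Acal_n$ in $V = \{v \in \Rbb^n : \sum v_i = 0\}$, whose hyperplanes are $H_F = \{v \in V : \sum_{i \in F} v_i = 0\}$ for $F \ne \emptyset, [n]$. Since $H_F = H_{[n] \setminus F}$ on $V$, the arrangement consists of exactly $2^{n-1}-1$ distinct hyperplanes in a space of dimension $n-1$.

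For $n \le 3$ the count is transparent. When $n=1$, every characteristic function on $[1]$ is already constant, so no unbalanced family exists and $E_1 = 0$. When $n = 2$, the single hyperplane $v_1 = 0$ in $V \cong \Rbb$ gives two chambers, matching the obvious maximal unbalanced families $\{\{1\}\}$ and $\{\{2\}\}$. When $n = 3$, three distinct lines through the origin in a plane cut out $2 \cdot 3 = 6$ chambers.

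For $4 \le n \le 9$ I would apply Zaslavsky's theorem, $E_n = (-1)^{n-1}\chi_{\Acal_n}(-1)$, where $\chi_{\Acal_n}$ is computed from the Möbius function on the intersection lattice $L(\Acal_n)$. The flats of $\Acal_n$ can be parametrized by subfamilies of $\{H_F\}$ closed under a natural join operation, and the $S_n$-symmetry lets one group the Möbius computation by orbit type, cutting the work substantially. For $n$ up to $5$ or $6$ this is tractable with hand computation and bookkeeping; beyond that one would implement either a lattice traversal or a direct sign-pattern enumeration with linear-programming feasibility checks and let a computer do the work.

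The main obstacle is sheer computational scale. The number of hyperplanes is already $255$ when $n = 9$, the intersection lattice grows super-exponentially, and the chamber count itself exceeds $4 \cdot 10^{11}$, ruling out any naive enumeration. The proposition therefore records values obtained and cross-verified in \cite{what_calc_TFT, unfold_co-dim1, vanEj} and the OEIS (subject to the indexing shift noted in the footnote); the substance of the proof is the reduction above, with agreement across these independent computations supplying the actual numerical entries.
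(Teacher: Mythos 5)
The paper offers no proof of this proposition at all: the values of $E_n$ are simply quoted from the cited computations in \cite{what_calc_TFT, unfold_co-dim1, vanEj} and the OEIS. Your proposal correctly identifies the reduction to counting chambers of the restricted all-subset arrangement, verifies the trivial cases by hand, and defers the remaining entries to those same sources, so it is consistent with (indeed, more detailed than) what the paper actually does here.
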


The main result of this paper are the following bounds on $E_n$, answering a question asked by Evans \cite{what_calc_TFT}.

\begin{thm}\label{mainthm}
For every $n \in \Nbb$,
\[
2^{\frac{(n-1)(n-2)}{2}} < E_n < 2^{(n-1)^2}.
\]
In particular, ${\displaystyle \lim_{n \to \infty} \frac{E_n}{n!} = \infty}$. 
\end{thm}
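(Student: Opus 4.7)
The plan is to prove the upper and lower bounds separately: the upper bound from the standard chamber count for hyperplane arrangements, and the lower bound by induction, exhibiting many chambers of the $(n+1)$-arrangement over each chamber of the $n$-arrangement. The ``in particular'' statement then follows from the lower bound via Stirling's approximation. To streamline notation, let $V_m = \{v \in \Rbb^m : \sum_i v_i = 0\}$, so $V = V_n$.

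For the upper bound, observe that $V$ has dimension $n-1$ and, because $\sum_{i \in F} v_i = -\sum_{i \notin F} v_i$ on $V$, complementary subsets define the same hyperplane; thus the arrangement has exactly $2^{n-1}-1$ distinct hyperplanes. The standard bound on the number of chambers of any arrangement of $m$ hyperplanes in $\Rbb^d$ then gives
\[
E_n \le \sum_{k=0}^{n-1}\binom{2^{n-1}-1}{k}.
\]
The dominant term is $k = n-1$, and using $\binom{N}{k} \le N^k/k!$ one obtains $E_n \le n\cdot 2^{(n-1)^2}/(n-1)!$, which is strictly less than $2^{(n-1)^2}$ for $n \ge 4$; the remaining cases are verified directly from the values in the preceding Proposition.

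For the lower bound, I would establish the recurrence $E_{n+1} \ge (2^{n-1}+1)\,E_n$ via the linear surjection $\pi : V_{n+1} \to V_n$ defined by
\[
\pi(w_1, \ldots, w_{n+1}) = (w_1, \ldots, w_{n-1}, w_n + w_{n+1}).
\]
The fiber over $v \in V_n$ is the line $\ell_v = \{(v_1, \ldots, v_{n-1}, v_n - t, t) : t \in \Rbb\}$. A direct computation shows that every hyperplane of the $V_{n+1}$-arrangement coming from a subset $F \subseteq [n+1]$ that contains both or neither of $\{n, n+1\}$ is the pullback of a hyperplane of the $V_n$-arrangement. Hence, for any chamber $C$ of the $V_n$-arrangement, $\pi^{-1}(C)$ avoids all such hyperplanes. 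The remaining hyperplanes -- those indexed by subsets containing exactly one of $n$ and $n+1$ -- pair up under complementation into $2^{n-1}$ distinct hyperplanes, each meeting $\ell_v$ transversely in a single point, parametrized by $t = \sum_{i \in F_0} v_i + v_n$ for some $F_0 \subseteq [n-1]$. For $v$ generic in $C$ these $2^{n-1}$ critical values are pairwise distinct and partition $\ell_v$ into $2^{n-1}+1$ open intervals; since each chamber of the $V_{n+1}$-arrangement is convex, it meets $\ell_v$ in at most one interval, so these $2^{n-1}+1$ intervals lie in distinct chambers, all of which project into $C$. Summing over $C$ and iterating from the base $E_2 = 2$ gives $E_n > 2^{(n-1)(n-2)/2}$.

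The main technical point is verifying that the $2^{n-1}$ critical values of $t$ are pairwise distinct for generic $v \in C$; this reduces to showing that, for distinct $F_0, F_0' \subseteq [n-1]$, the functional $v \mapsto \sum_{i \in F_0} v_i - \sum_{i \in F_0'} v_i$ is nontrivial on $V_n$, which is immediate from the linear independence of the coordinate functions, so its zero locus is a proper closed subset of $C$. Finally, since $2^{(n-1)(n-2)/2}/n! \to \infty$ by Stirling's formula, the lower bound yields $\lim_{n\to\infty} E_n/n! = \infty$.
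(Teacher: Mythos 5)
Your argument is correct in substance, but it takes a genuinely different route from the paper on \emph{both} halves of Theorem~\ref{mainthm}. For the upper bound, the paper develops the signature map, proves it is injective on $\mub_n$ (Theorem~\ref{signatures-unique}, via Lemma~\ref{unbal-lemma}) and that all entries of $\sig(\Fcal)$ have the same parity, and then counts admissible signatures; you instead invoke the classical region bound $\sum_{k=0}^{d}\binom{m}{k}$ for $m$ hyperplanes in $\Rbb^{d}$ applied with $m=2^{n-1}-1$, $d=n-1$. Your route is shorter and actually gives the numerically sharper estimate $E_n\le\sum_{k=0}^{n-1}\binom{2^{n-1}-1}{k}$, roughly $2^{(n-1)^2}/(n-1)!$, but it forgoes the structural information (unique realizability of a maximal unbalanced family by its signature) that the paper presents as being of independent interest. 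For the lower bound, the paper applies Zaslavsky's theorem together with Lucas's weak-map inequality $|w_k(\Acal_n)|\ge|w_k(\Acal_n^{(2)})|$ to compare with the binary projective geometry; your fibration argument --- sweeping the fiber $\ell_v$ of the coordinate-merging projection $\pi$ and counting the $2^{n-1}+1$ intervals cut out by the hyperplanes separating $n$ from $n+1$ --- is elementary and self-contained, and after iterating from $E_2=2$ it reproduces \emph{exactly} the paper's bound $E_n\ge\prod_{i=0}^{n-2}(2^i+1)$, which is no accident: your recurrence is in effect a geometric proof that $\chi(\Acal_{n}^{(2)},t)=\prod_{i=0}^{n-2}(t-2^i)$ lower-bounds the chamber count factor by factor. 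The one step worth making explicit is why the $2^{n-1}+1$ chambers you produce over $C$ are not double-counted as $C$ varies: each such chamber is connected and disjoint from every pulled-back hyperplane, so its $\pi$-image lies in a single chamber of the $V_n$-arrangement, namely $C$; with that line added the count is airtight. A shared caveat, not a defect of your proof relative to the paper's: the strict inequalities fail at $n=1$ (lower bound, since $E_1=0$) and $n=2$ (upper bound, since $E_2=2=2^{(n-1)^2}$), and the paper's own parity proposition also breaks at $n=2$, so your ``verify small cases directly'' step cannot actually rescue $n=2$; both proofs are sound for $n\ge3$, which is all that the asymptotic conclusion requires.
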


While the lower bound is obtained by applying known results from matroid theory, the upper bound involved developing
an understanding of the collection $\mub_n$ of maximal unbalanced families on an $n$-element set, which may be of independent
interest.


We will adopt the following notational conventions in this note.
Throughout this note, $i$, $j$, $k$, $l$, $m$, and $n$ will always be assumed to represent natural numbers (excluding $0$).
All counting will start at 1.
If $n$ is a natural number, then $2^{[n]}$ will be identified with the collection of all binary strings
of length $n$ as well as the collection
of all subsets of $[n]$ by associating a set $A \subseteq [n]$ with
its characteristic function.
We will use $\mub_n$ to denote the collection of all maximal unbalanced families of subsets of $[n]$.

Section \ref{lowerbound} is devoted to the derivation of the lower bound, while \S \ref{upperbound} will treat the upper bound.  We note that the results in \S \ref{lowerbound} benefitted from discussions with E.\ Swartz.

\section{The lower bound\label{lowerbound}}
To derive a lower bound on the number of maximal unbalanced families, and thus on the number of chambers of the arrangement in \eqref{rest_all_subsets}, we will need a combinatorially equivalent form of \eqref{rest_all_subsets} given by the linear forms
\begin{equation}\label{all subsets}
\sum_{i \in F} v_i \qquad (F \subseteq [n-1], F \ne \emptyset)
\end{equation}
in $\Rbb^{n-1}$.  In other words, we consider the arrangement $\mathcal{A}_{n}$ in $\Rbb^{n-1}$ consisting of the $2^{n-1}-1$ hyperplanes having as normals all nonzero 0-1 vectors in $\Rbb^{n-1}$.
In order to count the number of components of $\Rbb^{n-1}\setminus\mathcal{A}_{n}$,
we apply a result of Zaslavsky \cite{Zas}.

To do this, we define the {\em lattice of flats} ${L}_{n}$ of the arrangement $\mathcal{A}_{n}$ to be the family of all subspaces spanned over $\mathbb{Q}$ by subsets of the set of nonzero 0-1 vectors in $\Rbb^{n-1}$, ordered by inclusion.  The rank of $L_{n}$ is $n-1$.  The {\em characteristic polynomial} of $\mathcal{A}_{n}$ is then
\begin{equation}\label{charpoly}
\chi(\mathcal{A}_{n},t)= \sum_{x\in L_{n}} \mu(0, x)~ t^{\text{rank}(L_{n})-\text{rank}(x)}=\sum_{k=0}^{n-1}  w_{k}(L_{n}) ~t^{n-1-k},
\end{equation}
where $\mu$ is the M\"obius function of $L_{n}$.  The quantities $w_{k}(L_{n})$ in \eqref{charpoly} are called the {\em Whitney numbers of the first kind}.

The result of Zaslavsky \cite[Theorem A]{Zas} is that the number of chambers of $\mathcal{A}_{n}$ is 
\begin{equation}\label{numberofchambers}
(-1)^{n-1}\chi(\mathcal{A}_{n}, -1)=\sum_{x\in L_{n}} |\mu(0,x)|= \sum_{k=0}^{n-1} |w_{k}(L_{n})|.
\end{equation}
Unfortunately, we do not have an explicit formula for the polynomial $\chi(\mathcal{A}_{n},t)$.

To give a lower bound for the number of chambers of $\mathcal{A}_{n}$, we consider the linear matroid of all subspaces spanned over the 2-element field $\mathbb{F}_{2}$ by these same 0-1 vectors, now considered to be the set $\mathbb{F}_{2}^{n-1}\setminus \{(0,0,\dots,0)\}$.  By abuse of notation, we will denote this matroid by $\mathcal{A}_{n}^{(2)}$ and its lattice of flats by $L_{n}^{(2)}$.  The rank of $L_{n}^{(2)}$ is again $n-1$.

Since independence over $\mathbb{F}_{2}$ implies independence over $\mathbb{Q}$, we have that the map $\mathcal{A}_{n} \rightarrow \mathcal{A}_{n}^{(2)}$ is a rank-preserving weak map, and so, by a theorem of Lucas \cite[Proposition 7.4]{Lucas} (see also \cite[Corollary 9.3.7]{weak_maps}), we obtain
$$|w_{k}(\mathcal{A}_{n})| \ge |w_{k}(\mathcal{A}_{n}^{(2)})|$$ for each $k$, and so we  conclude
\begin{equation}\label{compareZ2}
(-1)^{n-1}\chi(\mathcal{A}_{n}, -1) \ge (-1)^{n-1}\chi(\mathcal{A}_{n}^{(2)}, -1).
\end{equation}

To complete the bound, we observe that $\mathcal{A}_{n}^{(2)}$ is the $(n-1)$-dimensional projective geometry over $\mathbb{F}_{2}$, and so its characteristic polynomial (see, for example, \cite[Example 3.6(3)]{Brylawski}) is
\begin{equation}\label{projcharpoly}
\chi(\mathcal{A}_{n}^{(2)},t) = \prod_{i=0}^{n-2} (t-2^{i}).
\end{equation}
Together, \eqref{numberofchambers}, \eqref{compareZ2} and \eqref{projcharpoly} give us the lower bound in Theorem \ref{mainthm}.

\begin{thm}
The number of maximal unbalanced families in $[n]$, equivalently, the number of chambers of the arrangement $\mathcal{A}_{n}$, is at least $\prod_{i=0}^{n-2} (2^{i}+1)$.
Thus 
$$E_{n} > \prod_{i=0}^{n-2} 2^{i} = 2^{\frac{(n-1)(n-2)}{2}}.$$
\end{thm}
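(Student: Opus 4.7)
The proof is essentially a matter of assembling the three pieces already established in the section: Zaslavsky's chamber count \eqref{numberofchambers}, the weak-map comparison \eqref{compareZ2}, and the explicit characteristic polynomial \eqref{projcharpoly} of the projective geometry $\mathcal{A}_n^{(2)}$ over $\mathbb{F}_2$. The plan is to chain these together and then carry out the routine arithmetic needed to extract a clean lower bound from the product formula.

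First I would substitute $t = -1$ into \eqref{projcharpoly} to get
\[
\chi(\mathcal{A}_n^{(2)}, -1) = \prod_{i=0}^{n-2} (-1 - 2^i).
\]
Each of the $n-1$ factors on the right is negative, so the sign of the product is $(-1)^{n-1}$, and therefore
\[
(-1)^{n-1}\chi(\mathcal{A}_n^{(2)}, -1) = \prod_{i=0}^{n-2} (2^i + 1).
\]
Combining this with \eqref{numberofchambers} for $\mathcal{A}_n$ and the comparison \eqref{compareZ2} yields
\[
E_n = (-1)^{n-1}\chi(\mathcal{A}_n, -1) \ge \prod_{i=0}^{n-2} (2^i + 1),
\]
which is the first assertion.

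For the second assertion, I would note the termwise strict inequality $2^i + 1 > 2^i$ (valid for every $i \ge 0$) and multiply, obtaining
\[
\prod_{i=0}^{n-2} (2^i + 1) > \prod_{i=0}^{n-2} 2^i = 2^{\,0 + 1 + \cdots + (n-2)} = 2^{(n-1)(n-2)/2},
\]
which completes the proof. There is no genuine obstacle here: the preceding section has already done the conceptual work (Zaslavsky, the Lucas weak-map inequality of \cite{Lucas}, and the identification of $\mathcal{A}_n^{(2)}$ as a projective geometry), so the only remaining task is the sign bookkeeping when evaluating at $t=-1$ and the trivial termwise comparison $2^i+1 > 2^i$.
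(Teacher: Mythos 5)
Your proposal is correct and follows exactly the route the paper takes: the paper's entire proof of this theorem is the single sentence that \eqref{numberofchambers}, \eqref{compareZ2} and \eqref{projcharpoly} combine to give the bound, and you have simply written out the sign bookkeeping at $t=-1$ and the termwise comparison $2^i+1>2^i$ that this sentence leaves implicit. No gaps.
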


\section{The signature of a maximal unbalanced family\label{upperbound}}

Let us begin by making the following easy observations.
If $\Fcal$ is in $\mub_n$, then $\Fcal$ does not contain either $\emptyset$ or $[n]$.
On the other hand, if $F$ is any other subset of $[n]$, then exactly one of $F$ and $[n] \setminus F$ are in $\Fcal$.
This follows from the fact that if $\Fcal$ is in $\mub_n$, then there is a $v \in \Rbb^n$
such that if $F$ is in $\Fcal$, then $\sum_{i \in F} v_i > 0$.
If we choose such a $v$ to be in generic position, then $\sum_{i \in X} v_i \ne 0$ unless $X$ is
$\emptyset$ or $[n]$.
Thus if $\Fcal$ is maximal, it selects between every nontrivial subset of $[n]$ and its complement, so $\left\vert \Fcal \right\vert = 2^{n-1} - 1$ for every $\Fcal \in \mub_n$.

It will be useful to let $\Sfrak_n$ denote the collection of all families $\Acal$ of subsets of $[n]$ such that:
\begin{itemize}

\item neither $\emptyset$ nor $[n]$ are in $\Acal$;

\item if $A$ is a proper nonempty subset of $[n]$,
then exactly one of $A$ and $[n] \setminus A$ are in $\Acal$.

\end{itemize}
Note that $\mub_n \subset \Sfrak_n$, and the inclusion is strict for $n \ge 3$.

The collection $\mub_n$ is equipped with a natural notion of adjacency:
$\Fcal$ and $\Gcal$ are adjacent if $\Fcal \setminus \Gcal$ has one element.
If we view elements of $\mub_n$ as chambers in the hyperplane arrangement $\Acal_n$, then
this notion of adjacency coincides with the adjacency of chambers.
Using the perspective provided by the hyperplane arrangement, it should be clear that
the adjacency graph on $\mub_n$ is connected.

If $\Fcal$ is in $\Sfrak_n$ for some $n$, define the \emph{signature} of $\Fcal$ -- denoted
$\sig(\Fcal)$ -- to be the sequence $\sbf$ of length $n$ defined by
\[
s_i = \left\vert\{F \in \Fcal : i \in F\}\right\vert.
\]
The goal of this section is to prove that the signature map is injective on $\mub_n$ for each $n$.
Moreover we will show that the signature of an element of $\mub_n$ can never coincide with the signature
of an element of $\Sfrak_n \setminus \mub_n$.
We will also show that the parity of the entries of $\sig(\Fcal)$ are always the same.

These observations are already enough to yield the upper bound  in Theorem \ref{mainthm}.  To see this, observe that
there are fewer than $(2^{n-1})^n$ possible signatures of a family with $2^{n-1}-1$ elements (for example, $(0,0,\dots,0)$ cannot be a signature).
If we require that all entries are even or all are odd, there are fewer than
$(2^{n-1})^n/2^{n-1} = 2^{(n-1)^2}$ such signatures, proving the upper bound.

To verify the above claims, define $\swapvector_{F} \in \{-1,1\}^{n}$, for $F$ a nonempty proper subset of $[n]$ and $i < n$, by
\[
\swapvector_F(i) = 
\begin{cases}
1 & \text{ if } i \in F \\
-1 & \text{ if } i \not \in F.\\
\end{cases}
\]
If $\Fcal$ is a family of nonempty proper subsets of $n$, define 
\[
\swapvector_\Fcal = \sum_{F \in \Fcal} \swapvector_F.
\]

\begin{lem} \label{unbal-lemma}
If $\Fcal$ is a nonempty unbalanced family of subsets of $[n]$, then $\swapvector_\Fcal$ is not constant.
\end{lem}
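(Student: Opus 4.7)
The plan is to proceed by contradiction, using the definition of unbalanced directly. First I would observe that the vector $\swapvector_F$ is just an affine rescaling of the characteristic vector $\chi_F$ of $F$: namely $\swapvector_F = 2\chi_F - \mathbf{1}$, where $\mathbf{1}$ denotes the all-ones vector in $\Rbb^n$. Summing over $F \in \Fcal$ then gives
\[
\swapvector_\Fcal \;=\; 2 \sum_{F \in \Fcal} \chi_F \;-\; |\Fcal|\, \mathbf{1}.
\]

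Now suppose, toward a contradiction, that $\swapvector_\Fcal$ is constant, say $\swapvector_\Fcal = c\,\mathbf{1}$ for some $c \in \Rbb$. Solving the displayed identity for $\sum_{F \in \Fcal} \chi_F$ shows that this sum is itself a scalar multiple of $\mathbf{1}$. Since $\Fcal$ is nonempty, $|\Fcal| > 0$, and we may divide by $|\Fcal|$ to obtain
\[
\frac{1}{|\Fcal|} \sum_{F \in \Fcal} \chi_F \;=\; \frac{c + |\Fcal|}{2|\Fcal|}\,\mathbf{1}.
\]
The left-hand side is a convex combination of the characteristic functions of the members of $\Fcal$, with all weights equal to $1/|\Fcal|$, and the right-hand side is constant. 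By the definition of balanced given in the introduction, this exhibits $\Fcal$ as a balanced family, contradicting the hypothesis that $\Fcal$ is unbalanced.

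In short, the entire content of the lemma is that constancy of $\swapvector_\Fcal$ is equivalent to the uniform average of the characteristic vectors being constant, and the latter is a special case of being balanced. There is no real obstacle here; the only thing to be careful about is the direction of the implication (unbalanced rules out the uniform convex combination, which is exactly what we need, since we have no freedom to choose non-uniform weights from $\swapvector_\Fcal$ alone).
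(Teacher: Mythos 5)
Your proof is correct and is essentially the same argument as the paper's: both show that constancy of $\swapvector_\Fcal$ forces the uniform convex combination of the characteristic vectors of $\Fcal$ to be constant, contradicting unbalancedness. The paper phrases this via the cardinalities of $\{F \in \Fcal : i \in F\}$ and its complementary count, while you use the identity $\swapvector_F = 2\chi_F - \mathbf{1}$, but these are the same computation.
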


\begin{proof}
Suppose that this is not the case and 
notice that the cardinalities of
\[
\{F \in \Fcal : i \in F\}
\]
\[
\{F \in \Fcal : i \not \in F\}
\]
do not depend on $i$;
their difference is the constant entry of $\swapvector_\Fcal$ and
their sum is the cardinality of $\Fcal$.
This means, however, that the signature of $\Fcal$ is constant and,
in particular, that the uniform probability measure on $\Fcal$ witnesses that $\Fcal$ is balanced, a contradiction.
\end{proof}

\begin{thm} \label{signatures-unique}
The function which takes an element of $\Sfrak_n$ to its signature is one-to-one on $\mub_{n}$.  Furthermore, the signature of an element of $\mub_n$ can never coincide with the signature of balanced family in $\Sfrak_n$.
\end{thm}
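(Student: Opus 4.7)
The plan is to prove a single unified statement: if $\Fcal \in \mub_n$ and $\Gcal \in \Sfrak_n$ share the same signature, then $\Gcal = \Fcal$. This yields both conclusions of the theorem at once. The bridge between signatures and the swap vectors just introduced is the identity $\swapvector_\Fcal(i) = 2 s_i - |\Fcal|$; since every element of $\Sfrak_n$ has cardinality $2^{n-1}-1$, equality of signatures on $\Sfrak_n$ is equivalent to equality of the associated swap vectors.

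Next I would exploit the defining property of $\Sfrak_n$. If $F \in \Fcal \setminus \Gcal$, then $[n] \setminus F$ must lie in $\Gcal \setminus \Fcal$, because $\Gcal$ selects exactly one member of every complementary pair. Using $\swapvector_{[n] \setminus F} = -\swapvector_F$, the summands in $\swapvector_\Fcal$ and $\swapvector_\Gcal$ coming from $\Fcal \cap \Gcal$ cancel, and those coming from $\Fcal \setminus \Gcal$ and $\Gcal \setminus \Fcal$ combine to yield
\[
\swapvector_\Fcal - \swapvector_\Gcal \;=\; 2\,\swapvector_{\Fcal \setminus \Gcal}.
\]
Equality of signatures forces the right-hand side to be the zero vector, and in particular constant.

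Now apply Lemma \ref{unbal-lemma} to $\Fcal \setminus \Gcal$: because its swap vector is constant, this family is either empty or balanced. But any balanced subfamily of $\Fcal$ forces $\Fcal$ itself to be balanced --- simply extend the witnessing convex combination by zero on the remaining members --- contradicting $\Fcal \in \mub_n$. Hence $\Fcal \setminus \Gcal = \emptyset$, i.e.\ $\Fcal \subseteq \Gcal$, and the equal cardinalities of elements of $\Sfrak_n$ give $\Fcal = \Gcal$.

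The step that requires the most care is the sign bookkeeping yielding the displayed identity, which relies on both halves of the $\Sfrak_n$ axiom (no element and its complement together, and one of the two present). Once that identity is in hand, the rest of the argument is a direct appeal to the lemma together with the observation that unbalancedness is preserved under enlargement.
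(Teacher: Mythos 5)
Your proposal is correct and follows essentially the same route as the paper: both arguments reduce equality of signatures to the vanishing (hence constancy) of $\swapvector_{\Fcal\setminus\Gcal}$ via the complementation bijection between $\Fcal\setminus\Gcal$ and $\Gcal\setminus\Fcal$, and then invoke Lemma \ref{unbal-lemma} together with the fact that nonempty subfamilies of unbalanced families are unbalanced. The only cosmetic difference is that the paper writes the identity directly as $\sig(\Bcal)=\sig(\Acal)+\swapvector_{\Bcal\setminus\Acal}$ rather than passing through $\swapvector_\Fcal-\swapvector_\Gcal=2\,\swapvector_{\Fcal\setminus\Gcal}$ and the relation $\swapvector_\Fcal(i)=2s_i-|\Fcal|$.
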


\begin{proof}
Suppose that $\Acal$ and $\Bcal$ are distinct elements of $\Sfrak_n$ and that $\Bcal$ is unbalanced.
Observe that $\Acal$ and $\Bcal$ have the same signature if and only if
$\swapvector_{\Bcal \setminus \Acal} = ( 0, \ldots, 0 )$:
as $\Acal$ and $\Bcal$ differ by a series of swaps, $\Bcal \setminus \Acal$ is the family of swapped sets.
Hence, $\sig(\Bcal) = \sig(\Acal) + \swapvector_{\Bcal \setminus \Acal}$.
Since $\Bcal$ is unbalanced, so is $\Bcal \setminus \Acal$, so Lemma \ref{unbal-lemma} implies that $\swapvector_{\Bcal \setminus \Acal}$ is not constant and, in particular, is not identically $0$.
Consequently, $\Acal$ and $\Bcal$ have distinct signatures.
\end{proof}

\begin{prop}
If $\Fcal$ is an element of $\mub_n$, then either all entries of $\sig(\Fcal)$ are even or all entries
are odd.
\end{prop}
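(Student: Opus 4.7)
The plan is to prove the stronger claim that $s_i \equiv s_j \pmod 2$ for any distinct $i,j \in [n]$; quantifying over pairs then yields the proposition for $n \ge 3$. The small cases $n\le 2$ are degenerate: $\mub_1$ is empty, while the two elements of $\mub_2$ have signatures $(1,0)$ and $(0,1)$, so the proposition as literally stated implicitly requires $n \ge 3$. A pleasant feature of the argument is that it uses only $\Fcal \in \Sfrak_n$: neither maximality nor unbalancedness is invoked.

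Write $a = |\{F \in \Fcal : i \in F,\ j \notin F\}|$ and $b = |\{F \in \Fcal : j \in F,\ i \notin F\}|$. Elements of $\Fcal$ containing both of $i,j$ or neither contribute equally to $s_i$ and $s_j$, so those contributions cancel and $s_i - s_j = a - b$. Since $-b \equiv b \pmod 2$, it suffices to determine $a + b \pmod 2$.

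The crux is then a pairing argument. Complementation $F \mapsto [n]\setminus F$ carries $\{F \subseteq [n] : i \in F,\ j \notin F\}$ bijectively onto $\{F \subseteq [n] : j \in F,\ i \notin F\}$; each complement pair $\{F, [n]\setminus F\}$ with exactly one of $i,j$ belonging to $F$ consists of one set drawn from each of these two families. Because $\Fcal$ selects exactly one representative from every such pair, $a + b$ equals the number of these \emph{split} pairs, namely $|\{F \subseteq [n] : i \in F,\ j \notin F\}| = 2^{n-2}$. Hence $s_i - s_j \equiv 2^{n-2} \pmod 2$, which vanishes precisely when $n \ge 3$. I do not anticipate any real obstacle: the entire proof is a short involution/bookkeeping exercise, and the only conceptual point worth checking carefully is the clean separation into matched pairs (each contributing $0$ to $s_i - s_j$ irrespective of the choice made by $\Fcal$) and split pairs (each contributing $\pm 1$, with exactly $2^{n-2}$ such pairs in total).
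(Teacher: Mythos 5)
Your proof is correct, and it takes a genuinely different route from the paper's. The paper argues by propagation along the adjacency graph of $\mub_n$: it computes the signature $(0,2^{n-2},\dots,2^{n-2})$ of one particular chamber, notes that passing to an adjacent chamber changes every coordinate of the signature by $\pm 1$ (so the parity pattern stays uniform), and invokes connectedness of the adjacency graph. Your argument instead fixes $i\neq j$, cancels the sets containing both or neither of $i,j$, and uses the involution $F\mapsto [n]\setminus F$ to count the split complementary pairs: since any $\Fcal\in\Sfrak_n$ selects exactly one set from each of the $2^{n-2}$ pairs having exactly one of $i,j$, you get $s_i-s_j\equiv a+b=2^{n-2}\equiv 0\pmod 2$ for $n\ge 3$. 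This buys you a strictly stronger statement -- the parity constraint holds for every element of $\Sfrak_n$, balanced or not, so it cannot by itself separate $\mub_n$ from $\Sfrak_n\setminus\mub_n$ -- and it is self-contained, avoiding the appeal to connectedness of the chamber graph (a geometric fact the paper only asserts). What it does not give you is the paper's bonus observation that the adjacency graph on $\mub_n$ is bipartite, which falls out of the propagation argument for free. Your handling of the degenerate cases is also right, and in fact sharper than the paper's: for $n=2$ the signatures $(1,0)$ and $(0,1)$ show the proposition fails as literally stated, and the paper's own base case $(0,2^{n-2},\dots,2^{n-2})$ has mixed parity there too, so the implicit hypothesis $n\ge 3$ is present in both arguments. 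For the application to the upper bound in Theorem \ref{mainthm}, either proof suffices.
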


\begin{proof}
If $\Fcal$ is the family of all nonempty subsets of $[n]$ that do not contain $1$,
then $\sig(\Fcal)$ is the sequence $(0,2^{n-2},\ldots,2^{n-2})$.
In particular, the conclusion of the proposition holds for $\Fcal$.
Next observe that if $\Gcal_0$ and $\Gcal_1$ are adjacent elements of $\mub_n$,
then every coordinate of $\sig(\Gcal_0)$ differs by $\pm 1$ from the corresponding coordinate of
$\sig(\Gcal_1)$.
The proposition now follows from the connectedness of the adjacency graph on $\mub_n$.
\end{proof}

\begin{remark}
The above proof actually shows that the adjacency graph on $\mub_n$ is bipartite.
\end{remark}

\begin{remark}
Maximal unbalanced familes might be viewed in the context of {\em threshold families} whose defining weights sum to zero.  See for example \cite{KlivRein}, where attention is restricted to uniform families of subsets ({\em i.e.}, all subsets having the same cardinality), and the signature of a family is called its {\em degree sequence}.  Our Theorem \ref{signatures-unique} should be compared to the unique realizability conclusion of \cite[Theorem 3.1]{KlivRein}.  For our particular case, the conclusion of Theorem \ref{signatures-unique} is not directly comparable to the latter, since we have not restricted to uniform families, and the proof here is more elementary.
\end{remark}



\def\cprime{$'$} \def\Dbar{\leavevmode\lower.6ex\hbox to 0pt{\hskip-.23ex
  \accent"16\hss}D}

\end{document}